\newtheorem{thma}{Lemma}
\newtheorem{thmb}{Theorem}
\def\imod#1{\allowbreak\mkern10mu({\operator@font mod}\,\,#1)}
\newcommand{\txt}{\textrm}
\newcommand{\xx}{\textbf{x}}
\newcommand{\aaa}{\textbf{a}}
\newcommand{\yy}{\textbf{y}}
\newcommand{\www}{\textbf{w}}
\newcommand{\zz}{\textbf{z}}
\newcommand{\rrr}{\textbf{r}}
\newcommand{\beq}{\begin{equation}}
\newcommand{\eeq}{\end{equation}}
\newcommand{\beqq}{\begin{equation*}}
\newcommand{\eeqq}{\end{equation*}}
\newcommand{\bal}{\begin{align}}
\newcommand{\eali}{\end{align}}
\newcommand{\ball}{\begin{align*}}
\newcommand{\ealii}{\end{align*}}
\newcommand{\eps}{\varepsilon}
\numberwithin{equation}{section}
\numberwithin{equation}{section}
\begin{document}
\title[Weak Approximation for Cubic Hypersurfaces]{Weak Approximation for Cubic Hypersurfaces of Large Dimension}
\author{Mike Swarbrick Jones}
\address{  School of Mathematics, Bristol University, Bristol, BS8  1TW}  

\email{mike.swarbrickjones@bristol.ac.uk}

\begin{abstract} We address the problem of weak approximation for general cubic hypersurfaces defined over number fields, with arbitrary singular locus.  In particular, weak approximation is established for the smooth locus of projective, geometrically integral, non-conical cubic hypersurfaces, of dimension at least 17.  The proof utilises the Hardy-Littlewood circle method, and the fibration method.   \\  \\  \textbf{Mathematics Subject Classification (2010)}. 11G35 (11D25, 11D72, 11P55, 14G25).
\end{abstract}

\maketitle

\section{Introduction}  Let $k$ be an algebraic number field.  The possible existence and structure of $k$-rational points on hypersurfaces defined over $k$ is a major theme in number theory and arithmetic geometry.  Let $X \subset \mathbb{P}_{k}^{n-1}$ be a variety defined over $k$.  Given a place $\nu$ of $k$, define $k_\nu$ to be the completion with respect to that place.  If $X$ is smooth, recall that \textit{weak approximation} holds for $X$ if $X(k) \neq \emptyset$ and the image of the diagonal embedding $$X(k) \rightarrow \prod_{\nu \in S} X(k_\nu)$$ is dense, for any finite set of places $S$.  Given a possibly singular $X$, we shall consider weak approximation for $X_{smooth}$, the smooth locus of $X$.

We say that $X$ is $k$-rational if there is a $k$-birational morphism \linebreak $\mathbb{P}^{n-1}_k \rightarrow X$.  Weak approximation is a birational invariant on smooth integral varieties, and since weak approximation holds on $\mathbb{P}_k^m$, for any positive integer $m$, it must hold for any smooth $k$-rational variety.  

A classical observation is that a quadric hypersurface $Q$ with a non-singular $k$-point will be $k$-rational, provided it is geometrically integral.  Essentially this is because we can parameterise the surface by lines through the $k$-point.  In this case, the smooth locus of the quadric satisfies weak approximation.  The Hasse-Minkowski theorem implies that $Q_{smooth}(k) \neq \emptyset \iff Q_{smooth}(k_\nu) \neq \emptyset$ for all places $\nu$ of $k$.

For larger degree hypersurfaces, relatively little is known.  The emergence of counterexamples to weak approximation, even when rational points are present, is an indication that the situation is much more complex.  For instance, with $k = \mathbb{Q}$ we have a cubic surface of Swinnerton-Dyer \cite{SwinnyD},

\beq\label{Swinny} x_1(x_2^2 + x_3^2) = (4x_4-7x_1)(x_4^2 - 2 x_1^2).\eeq  Ignoring the subvariety $x_1 = x_4 = 0$, this has two components over the reals: one with $x_4/x_1 \geq 7/4$, which contains infinitely many rational points, the other with $|x_4/x_1|\leq \sqrt{2}$ which contains none.  Clearly then this fails weak approximation.  This counterexample can be accounted for by the \textit{Brauer--Manin Obstruction}.  A conjecture of Colliot-Th\'{e}l\`{e}ne (see \cite{Colliot0}) states that this will be the only such obstruction for rationally connected varieties, such as cubic hypersurfaces of dimension at least 2.

Suppose (once and for all) that $Y \subset \mathbb{P}_k^{n-1}$ is a geometrically integral, non-conical cubic hypersurface, given by the zero locus of a cubic form $C \in k[x_1, \dots , x_n]$.  The Brauer group of $Y_{smooth}$ will be trivial if its dimension is at least 3, and the codimension of the singular locus is at least 4 (see the appendix of Colliot-Th\'{e}l\`{e}ne in \cite{Tim1}).  Thus we expect that these assumptions, together with $Y(k) \neq \emptyset$, imply that weak approximation holds for $Y_{smooth}$.

Let us now consider a few cases where weak approximation for $Y_{smooth}$ is actually known.  Firstly a classical remark.  If $Y_{sing}(k)\neq \emptyset$, where $Y_{sing}$ is the singular locus, then we can paramaterise $Y$ by means of lines through a rational singular point, so $Y$ is $k$-rational.  If $Y$ contains 2 conjugate singular points, and $n \geq 7$, it follows from work of Harari \cite{Harari2}.  If $Y$ contains 3 conjugate singular points and $Y(k_\nu) \neq \emptyset$ for all $\nu$, it is known for $n=4$ (Coray \cite{Coray1}, Coray and Tsfasman \cite{Coray2}) and for $n \geq 6$ (Colliot-Th\'{e}l\`{e}ne and Salberger \cite{Colliot2}), but counterexamples exist for $n=5$ (see \cite[Section 8]{Colliot2}).  If $n \geq 5$, $Y$ is smooth, and contains a $k$-rational line, then it follows from Harari \cite{Harari}.  Finally, a result of Skinner \cite{SK}, shows that if $Y$ is smooth, $n \geq 17$ is sufficient. 

Note that all the results mentioned so far rely fundamentally on the shape of the singular locus of $Y$, either that it is empty, or contains some arithmetic structure.  The aim of this paper is to consider general cubic hypersurfaces $Y$ with arbitrary singular locus, and to obtain a reasonable lower bound on the dimension required to guarantee that $Y_{smooth}$ satisfies weak approximation.  Our main result is the following.

\begin{thmb}\label{main}Suppose we have a geometrically integral, non-conical cubic hypersurface $Y \subset \mathbb{P}_k^{n-1}$ defined over $k$.  If $n \geq 19$, then $Y_{smooth}$ satisfies weak approximation.  \end{thmb}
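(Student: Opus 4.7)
The plan is to combine the two tools highlighted in the abstract---the Hardy--Littlewood circle method (adapting Skinner's treatment of the smooth case \cite{SK}) and the fibration method---with the split dictated by $\sigma=\dim Y_{\mathrm{sing}}$. Fix a finite set $S$ of places of $k$ and a target $(P_\nu)_{\nu\in S}\in\prod_{\nu\in S}Y_{\mathrm{smooth}}(k_\nu)$ to approximate. We may assume $Y_{\mathrm{sing}}(k)=\emptyset$ throughout, since a rational singular point gives a birational map $\mathbb{P}^{n-2}_k\dashrightarrow Y$ by projection, so $Y$ is $k$-rational and weak approximation of $Y_{\mathrm{smooth}}$ is immediate.

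For the circle-method component I would adapt Skinner's proof by inserting a smooth analytic weight concentrated $\nu$-adically near each $P_\nu$ for $\nu\in S$. Birch's singular-series/singular-integral decomposition gives a main term whose positivity at the smooth adelic target is standard; the minor-arcs estimate is the only step that sees the singular locus, and goes through provided the $h$-invariant $h(C)=n-\dim V^*(C)=n-\sigma-1$ clears a Skinner-type threshold. For $n\geq 19$ this delivers $k$-points approximating $(P_\nu)_{\nu\in S}$ whenever $\sigma$ is below that threshold. When $\sigma$ is larger, I would fibre $Y$ by a generic $k$-rational pencil: choose generic $k$-linear forms $L_1,L_2$ and consider $\pi: Y\dashrightarrow\mathbb{P}^1_k$ given by $x\mapsto [L_1(x):L_2(x)]$. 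A Bertini-type argument (descended from $\bar k$ to $k$, using non-conicality of $Y$) shows that for generic $L_1,L_2$ the generic fibre $Y_\eta$ is a geometrically integral, non-conical cubic hypersurface of dimension $n-3$ in $\mathbb{P}^{n-2}_{k(\eta)}$ with $\dim(Y_\eta)_{\mathrm{sing}}=\sigma-1$. The fibration method in the form of Harari \cite{Harari}---combined with the trivial weak approximation on $\mathbb{P}^1_k$ and an implicit-function-theorem transport of each $P_\nu$ onto a smooth fibre of $\pi$, possible because $P_\nu\in Y_{\mathrm{smooth}}(k_\nu)$ and $\pi$ is a submersion there for generic $L_i$---then lifts weak approximation from $Y_{\eta,\mathrm{smooth}}$ to $Y_{\mathrm{smooth}}$.

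The main technical obstacle will be the interaction between these two steps. A naive pencil iteration keeps $n-\sigma$ invariant, so by itself it cannot reduce a high-$\sigma$ instance to Skinner's smooth regime; the circle-method component must therefore be genuinely new, handling singular cubics via an improved Weyl/minor-arcs bound that accommodates a positive-dimensional $V^*(C)$. This is where the upgrade from Skinner's $n\geq 17$ to our $n\geq 19$ is spent, and I expect it to be the hardest step. If extremely singular cubics remain unreachable even after such an improvement, one should invoke a structural lemma that cubic forms with very large singular locus are of restricted shape---cubics in fewer variables, or containing linear subvarieties of high dimension---recovering $k$-rationality or permitting a further inductive reduction. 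Secondary issues, such as managing the degenerate fibres of $\pi$ in the fibration-method lift (by $\nu$-adic perturbation via the implicit function theorem) and descending the Bertini-type genericity arguments from $\bar k$ to $k$ by varying over a Zariski open subscheme of pencils, are expected to be routine.
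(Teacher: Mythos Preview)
Your proposal has a genuine gap, and you partly diagnose it yourself: a generic hyperplane pencil keeps $n-\sigma$ fixed, so iterating it never reaches the circle-method regime, and you are left hoping for an unspecified ``improved minor-arcs bound'' or a vague structural lemma. Neither is supplied, and neither is how the paper closes the argument. The split is not by $\sigma=\dim Y_{\mathrm{sing}}$ but by the Davenport--Lewis $h$-invariant $h=h_k(C)$, the least $h$ with $C=\sum_{i=1}^h L_iQ_i$; this is \emph{not} the Birch quantity $n-\dim V^*$ you write down, and conflating the two is already a problem. When $h\geq 16$ one invokes Pleasants' circle-method result (no new minor-arcs work is needed). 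When $h$ is small the crucial idea---missing from your plan---is to exploit the decomposition $C=\sum y_iQ_i$ directly: after a change of variables the linear forms are coordinates $y_1,\dots,y_h$, the linear space $\{y_1=\cdots=y_h=0\}\subset Y$ has dimension $n-h-1$, and blowing it up gives a fibration $W\to\mathbb{A}^h$ whose generic fibre is a \emph{quadric} in $n-h+1$ variables, not a cubic in one fewer variable. For $n\geq h+5$ these quadrics live in $\mathbb{P}^5$, and an elementary linear-algebra argument (using only that $Y_{\mathrm{sing}}(k)=\emptyset$) shows they generically have rank $\geq 5$, so Hasse--Minkowski plus the standard fibration lemma finishes.

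Combining these two ranges gives $n\geq 20$; the single outstanding case $n=19$, $h=15$ is handled by a sharper version of the same quadric-fibration argument (now the quadrics sit in $\mathbb{P}^4$, and one must show the full $5\times5$ matrix generically has rank $5$, which requires analysing the quadratic and cubic entries rather than just the linear $4\times4$ block). Your pencil-into-cubics scheme never produces quadrics and therefore never accesses the classical weak approximation for rank-$\geq 5$ quadrics that does the real work here.
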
  
In a qualitative sense this result is best possible, in that the geometrically integral and non-conical assumptions cannot be eliminated.  For example, we could consider the union of a line and a quadric which does not have any non-singular rational points, or we could take a cone over the surface $\eqref{Swinny}$.

\textbf{Acknowledgements:}  \linebreak
I am indebted to Professor Colliot-Th\'{e}l\`{e}ne and Professor Leep for numerous remarks on an earlier version of this paper.  These ultimately led to a substantially improved version of Lemma $\ref{main2}$, and hence Theorem $\ref{main}$.  I would also like to thank my supervisor Tim Browning for suggesting the original problem, and for his excellent supervision over the course of the project.  Finally, I am grateful to Dan Loughran for several useful discussions.

\section{Structure of the Proof}  Theorem $\ref{main}$ is related to the result of \cite{SK}, which was obtained using the Hardy-Littlewood circle method.  This is advantageous when the dimension of the singular locus is small.  The circle method can also be an effective tool when the equations involved have large `$h$-invariant'.  This concept was originally introduced by Davenport and Lewis \cite{Dav2}.  

Given a cubic form $C$, define the $h$-invariant of $C$, $h=h_k(C)$ as follows: $h$ is the smallest positive integer such that $C(\xx)$ is expressible identically as \beq\label{hinv} L_1(\xx)Q_1(\xx)+\cdots + L_h(\xx)Q_h(\xx),\eeq where $L_i$, $Q_i$ are linear, quadratic forms respectively, with coefficients in $k$.  Similarly for the cubic hypersurface $Y$, we shall define $h_k(Y)$ to mean the $h$-invariant of the underlying cubic form.  Finally, for a cubic polynomial $f$, we define $h_k(f)$ to be the $h$-invariant of the homogeneous cubic part of the polynomial.  Clearly it is an invariant with respect to non-singular linear transformations on $\xx$ over $k$.  Also note that $h_k(Y) \leq n$, with equality if and only if $Y(k)= \emptyset$.  Furthermore, if $n \geq h_k(Y) + r + 1$, there is a $k$-rational $r$-plane contained in $Y$, given by $L_i = 0$ in $\eqref{hinv}$.

Our strategy is to show weak approximation for three classes of cubics, the union of which contains all of those considered in Theorem $\ref{main}$.  The first class is cubics for which the $h$-invariant is sufficiently large.

\begin{thma}\label{main1}Suppose we have a geometrically integral, non-conical cubic hypersurface $Y$ defined over $k$.  If $h_k(Y) \geq 16$, then $Y_{smooth}$ satisfies weak approximation.\end{thma}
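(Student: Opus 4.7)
The plan is to establish Lemma \ref{main1} via a direct application of the Hardy--Littlewood circle method, in the spirit of Skinner's treatment \cite{SK} of smooth cubics. The key observation is that Skinner's argument, at its core, relies on a large $h$-invariant---which for a smooth cubic form in $n \geq 17$ variables is automatic. Here we are given the hypothesis $h_k(Y) \geq 16$ directly, so smoothness of $Y$ is no longer required as input.

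I would begin by fixing a finite set of places $S$ of $k$ containing all the archimedean places, together with target points $P_\nu \in Y_{smooth}(k_\nu)$ for each $\nu \in S$ and a parameter $\eps > 0$. The aim is to exhibit a $k$-rational point $P \in Y(k)$ at $\nu$-adic distance less than $\eps$ from $P_\nu$ for every $\nu \in S$. After clearing denominators and rescaling, this reduces to counting integral vectors $\xx \in \mathcal{O}_k^n$ with $C(\xx) = 0$, subject to congruence conditions at the finite places of $S$ (enforcing $\nu$-adic closeness to each $P_\nu$), and with archimedean components lying in a suitable dilated box around chosen representatives of the archimedean targets.

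The circle method over a number field then expresses this count as an integral of an exponential sum over an adelic fundamental domain, which I would decompose into major and minor arcs. On the major arcs, the contribution factors as a singular integral---strictly positive thanks to the smoothness of each $P_\nu$---times a singular series, whose local factors are positive because the hypothesis $h_k(Y) \geq 16$, together with standard local solubility theorems for cubic forms in many variables, guarantees the existence of non-singular local solutions at every place. On the minor arcs, one applies a cubic Weyl--Davenport--Birch--Heath-Brown differencing bound, adapted to number fields; the resulting saving is governed precisely by $h_k(C)$, and the hypothesis $h_k(C) \geq 16$ is exactly what is required for that saving to dominate the main term once local conditions are imposed.

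The principal technical obstacle is the minor-arc estimate: one needs a Weyl-type bound for cubic exponential sums that is uniform in the congruence conditions imposed at the finite places of $S$, while retaining a saving controlled by the $h$-invariant. Once this is in place, balancing the two contributions yields an asymptotic formula whose leading term is positive and grows with the dilation parameter. Consequently there are $k$-rational points on $Y$ arbitrarily close to $(P_\nu)_{\nu \in S}$; since the target points lie in the smooth locus, all sufficiently close rational points also lie in $Y_{smooth}(k)$, establishing weak approximation.
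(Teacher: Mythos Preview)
Your proposal is correct and takes essentially the same approach as the paper. The only cosmetic difference is that the paper, rather than rerunning the circle method \`a la Skinner, encodes the finite-place congruence conditions via a shift $f(\xx)=C(\xx+\aaa)$ together with a restriction to an integral ideal $\mathfrak{m}^n$, and then quotes Pleasants' circle-method result \cite{Pleasants} for cubic polynomials with $h$-invariant at least $16$ (observing that his arguments go through verbatim with $\mathfrak{o}$ replaced by $\mathfrak{m}$).
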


To prove this we take a cue from the concluding remarks of \cite{SK}, and note that to find $k$-rational points that are $\mathfrak{p}$-adically close to a $\mathfrak{p}$-adic point, it is sufficient to find \textit{integer} points that are in specific classes modulo $\mathfrak{p}^t$ for some integer $t$; this is equivalent to finding integral solutions to a cubic polynomial $f$ where the cubic part has the same $h$-invariant as the original cubic form.  For large $h$, this problem is tailor-made for the circle method.  Indeed, we will use a mild generalisation of a previous result of Pleasants \cite{Pleasants} which obtains an asymptotic expression for the number of integral solutions to $f$ in an expanding region, under the assumption that $h_k(f) \geq 16$.

The second class of cubic hypersurfaces we consider are those for which the dimension is somewhat larger than the $h$-invariant. 

\begin{thma}\label{main2}Suppose we have a geometrically integral, non-conical cubic hypersurface $Y$ defined over $k$.  If $n\geq h_k(Y)+5$, then $Y_{smooth}$ satisfies weak approximation.\end{thma}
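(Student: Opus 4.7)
The hypothesis $n\geq h_k(Y)+5$, combined with the remark in the introduction that $n\geq h_k(Y)+r+1$ forces $Y$ to contain a $k$-rational $r$-plane, immediately yields a $k$-rational $4$-plane $\Lambda\subset Y$. The plan is to use $\Lambda$ to exhibit a birational modification of $Y$ as a four-dimensional quadric fibration over a projective base, and then to invoke the fibration method for weak approximation.

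Concretely, I parameterise the $5$-planes $V\subset\mathbb{P}^{n-1}_k$ containing $\Lambda$ by a $k$-rational projective space $B\cong\mathbb{P}^{n-6}_k$. For each such $V$, the intersection $V\cap Y$ is a cubic threefold in $V\cong\mathbb{P}^5$ containing $\Lambda$ as a hyperplane component, and residual intersection gives $V\cap Y=\Lambda\cup Q_V$, where $Q_V$ is a four-dimensional quadric hypersurface of $V$. Passing to the blowup $\tilde Y$ of $Y$ along $\Lambda$, the projection from $\Lambda$ extends to a dominant morphism $\pi\colon\tilde Y\to B$ whose fibre over $V\in B$ is $Q_V$. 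Geometric integrality of $Y$ together with the non-conical hypothesis should guarantee that the generic fibre is a geometrically integral, and in fact smooth, four-dimensional quadric.

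With the fibration in hand, I would apply a suitable form of the fibration method (in the style of Colliot-Th\'el\`ene and Harari). Weak approximation on the base $B$ is free, as it is a projective space, and a smooth four-dimensional quadric over $k$ with a $k$-rational point is $k$-rational, so satisfies weak approximation. The essential task is therefore to produce $k$-rational points on a Zariski-dense and $\nu$-adically dense collection of fibres. Here $\Lambda$ is the principal tool: for $V\in B(k)$ the intersection $\Lambda\cap Q_V$ is a quadric hypersurface of $\Lambda\cong\mathbb{P}^4_k$, and any smooth $k$-point of this sub-quadric is a $k$-point of $Q_V$. The ample $k$-rational structure inside $\Lambda$ should provide the flexibility to vary these sub-quadrics and to secure rational points.

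The main obstacle will be precisely this last step: one must arrange rational points on $Q_V$ for enough $V\in B(k)$ compatibly with the prescribed $\nu$-adic data at the places in $S$, and then transfer the resulting approximation on $\tilde Y$ back to $Y_{smooth}$ without interference from the exceptional locus of the blowup or from those $V$ for which $Q_V$ degenerates. The non-conical hypothesis should confine the degenerate locus of $\pi$ to a proper closed subscheme of $B$, and it is the dimension $4$ of $\Lambda$—rather than a smaller linear subspace—that should give the fibration method just enough room to succeed in the presence of these difficulties.
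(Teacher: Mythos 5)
Your geometric setup matches the paper's: blow up $Y$ along a $k$-rational $4$-plane $\Lambda$ cut out by the $h$-invariant decomposition, project to obtain a fibration in quadrics in $\mathbb{P}^5$, and invoke a fibration-method result. But from that point on there are two genuine gaps.

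The first is the claim that ``geometric integrality together with the non-conical hypothesis should guarantee that the generic fibre is $\ldots$ smooth.'' This is not true and is not what the paper establishes. The paper only shows that the generic fibre has \emph{underlying rank at least $5$} as a quadric in six variables (so it may still be a cone), and this is all that is needed for the relevant fibration result, namely \cite[Proposition~3.9]{Colliot}, which requires rank $\geq 5$ so that the Hasse principle for quadratic forms applies to the fibres. Establishing this rank bound is the technical heart of the section: one writes the fibre as a symmetric $6\times 6$ matrix $A(\yy)$, observes that its leading $5\times 5$ block is $\sum_i y_i M_i$ for symmetric matrices $M_i$, and then proves by a linear-algebra induction (Lemma~\ref{s-5}) that $\det(\sum_i y_i M_i)$ is not identically zero \emph{provided} that for every $j$ some $M_i$ has a nonzero $(j,j)$ entry, in every $k$-rational coordinate system. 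Crucially, if this last condition fails one does not get stuck: one instead deduces that $C$ is at most linear in some $x_j$, so $Y$ has a $k$-rational singular point, and then $Y$ is $k$-rational by projection from that point. This dichotomy — generic rank $\geq 5$, or else $Y_{sing}(k)\neq\emptyset$ — is entirely absent from your sketch, and without it you have no control over the degeneracy locus.

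The second gap is in your proposed mechanism for producing $k$-points on the fibres, via $\Lambda\cap Q_V$. This is not the route the paper (or \cite{Colliot}) takes: their fibration result supplies $k$-points on the fibres via the Hasse principle for quadratic forms of rank $\geq 5$, with local points furnished by the approximation data on the total space. Your alternative of intersecting with $\Lambda$ gives a quadric threefold in $\mathbb{P}^4$ whose rank you have no control over, so its possession of $k$-points is no easier than the original problem. In short: the framing is right, but the proof is missing exactly the part that makes it work — the rank-$5$ lemma, the $Y_{sing}(k)\neq\emptyset$ alternative, and the correct input to the fibration method.
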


This is based on the fibration method (see for example \cite{Colliot0} for a more general description), which reduces the question to proving weak approximation for the fibres of a particular map involving $Y$.  Thanks to the assumptions of the Lemma, the fibres in question are quadrics of dimension at least 4.  As noted in the introduction, a quadric $Q$ will satisfy weak approximation if $Q_{smooth}(k_\nu)\neq \emptyset$ for each place $\nu$ of $k$.  A well known theorem of Hasse \cite{Hasse} tells us that this holds if the underlying quadratic form has rank at least 5, and $Q_{smooth}(k_\nu) \neq \emptyset$ for each real place $\nu$.  We must then find conditions on $Y$ under which we can assume that for a generic fibre $Q$, this is the case. This is achieved using an elementary argument.

Combining Lemmas $\ref{main1}$ and $\ref{main2}$, we see that Theorem $\ref{main}$ is true with $n \geq 20$.  To complete the Theorem, we must consider the case $n=19$, $h_k(Y)=15$.  This is disposed of by the following.

\begin{thma}\label{main3}Suppose we have a geometrically integral, non-conical cubic hypersurface $Y$ defined over $k$.  If $11 \leq h_k(Y) \leq n-4$, then $Y_{smooth}$ satisfies weak approximation.\end{thma}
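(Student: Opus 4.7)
My plan is to run the quadric-fibration argument from Lemma \ref{main2}, but with fibres that are quadrics in only $5$ variables (rather than $\geq 6$), compensating with the stronger hypothesis $h_k(Y) \geq 11$. Lemma \ref{main2} already covers $n \geq h+5$, so the only new case is $n = h+4$, which I assume from now on. Write $C = L_1Q_1 + \cdots + L_hQ_h$; minimality of $h = h_k(Y)$ forces the $L_i$ to be linearly independent, so $W := \{L_1 = \cdots = L_h = 0\} \subset Y$ is a $k$-rational linear subspace of dimension exactly $3$.

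Fix a complementary $k$-rational subspace $V \cong \mathbb{P}^{h-1}$ and consider the linear projection $\pi : Y \dashrightarrow V$ away from $W$. For each $v \in V$, the fibre of $\pi$ lies in the $4$-plane $\Lambda_v := \mathrm{span}(W,v)$, and since $W$ is a hyperplane of $\Lambda_v$ contained in $Y$, the intersection decomposes as $\Lambda_v \cap Y = W \cup Q_v$, where $Q_v$ is a $3$-dimensional quadric in $\Lambda_v$, i.e.\ the zero locus of a quadratic form in $5$ variables whose coefficients depend polynomially on $v$.

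The decisive step, and the main obstacle, is to prove that for a Zariski open dense set of $v \in V$ the quadric $Q_v$ has full rank $5$. I would argue by contradiction: if the determinant of the family $\{Q_v\}_{v \in V}$ vanished identically on $V$, then every $Q_v$ would admit a kernel vector depending polynomially on $v$. Translating this extra structure back through the projection, one extracts a shorter representation of $C$ as a sum of strictly fewer than $11$ linear-quadratic products $L_i'Q_i'$, contradicting $h_k(Y) \geq 11$. The careful bookkeeping that justifies precisely the bound $11$ (rather than some other constant) is the technical heart of the proof, and is where I would expect the real work.

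Once generic smoothness is established, the fibration argument concludes as in Lemma \ref{main2}. Given local points $P_\nu \in Y_{smooth}(k_\nu)$ for $\nu$ in a finite set of places $S$, first perturb each $P_\nu$ slightly so that it lies off $W$ (possible since $W$ is a proper closed subvariety of $Y$), and project to obtain $\pi(P_\nu) \in V(k_\nu)$. Weak approximation on $V \cong \mathbb{P}^{h-1}$ produces a rational $v \in V(k)$ close to each $\pi(P_\nu)$ and avoiding the discriminant divisor, so $Q_v$ is a smooth $3$-dimensional quadric in $\mathbb{P}^4$ with local points close to the $P_\nu$. Hasse--Minkowski for quadratic forms of rank $\geq 5$, together with weak approximation for smooth quadrics with a rational point (which are $k$-rational by projection from that point), then yields a rational point of $Q_v \subset Y$ approximating each $P_\nu$ arbitrarily well, as required.
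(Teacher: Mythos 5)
Your high-level framework is correct and matches the paper: for $n = h+4$ with $h = h_k(Y) \geq 11$ (the only case not already covered by Lemma~\ref{main2}), view $Y$ as fibred in quadrics in $5$ variables over a base of dimension $h$, and apply the quadric fibration method (the paper's Lemma~\ref{CSSD}, from Colliot-Th\'el\`ene--Sansuc--Swinnerton-Dyer) once the generic fibre is known to have underlying quadratic form of rank $5$. But the entire content of the lemma is precisely the step you set aside --- and the route you sketch for it does not match what is needed and has a real gap.

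First, the dichotomy you should be proving is not ``generic rank $5$ or $h < 11$''. The paper proves (Lemma~\ref{second3}): either $Q_\yy$ is generically of full rank $5$, \emph{or} $Y_{sing}(k) \neq \emptyset$. The second branch is not a contradiction to the hypothesis but a second, independently favourable outcome: a $k$-rational singular point makes $Y$ $k$-rational by projection, so weak approximation follows immediately. If you instead try to argue ``identically vanishing determinant $\Rightarrow$ a polynomial family of kernel vectors $\Rightarrow$ a shorter decomposition $C = \sum L_i' Q_i'$ with fewer than $11$ summands,'' you should be worried: the kernel of $Q_v$ can jump in dimension and change direction with $v$, and it is not clear how to extract a \emph{global} linear-quadratic splitting of $C$ from a varying kernel. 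You flag this yourself as ``where the real work is,'' but that is an admission that the proof is missing, not a proof.

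Second, the role of the bound $h \geq 11$ in the paper is structural and quite different from what you propose. Since symmetric $4 \times 4$ matrices form a $10$-dimensional $k$-vector space, having $h \geq 11$ quadratic forms $Q_1,\dots,Q_h$ in the $4$ retained variables forces a linear dependence; after a ``strong equivalence'' change of variables in $\yy$ one can arrange that at least $h-10 \geq 1$ of the associated matrices $M_i$ vanish identically. This creates a sub-cubic $C^\star$ in the corresponding $y_i$'s with no quadratic $\xx$-terms, and the paper then runs a two-case analysis on whether the cross-terms $q_i^\star$ vanish. In each case one chooses a special vector $\yy$ (with one coordinate scaled by a large parameter $P$) so that a specific block of the $5\times5$ matrix $A(\yy)$ dominates the Leibniz expansion of $\det A(\yy)$, producing a nonzero determinant; along the way one repeatedly uses the contrapositive ``if a certain $x_j^2$ never appears, then $(1,0,\dots,0)$ is a rational singular point.'' None of this degeneration bookkeeping, nor the alternative $Y_{sing}(k)\neq\emptyset$, appears in your sketch, so the core of the lemma is unproved.

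In short: your reduction to showing that the generic fibre has rank $5$ is right, and your use of Hasse's theorem and the fibration method afterwards is standard, but the assertion ``generic rank $5$ (assuming $Y_{sing}(k)=\emptyset$)'' is exactly the technical heart of the lemma, and the mechanism you gesture at for establishing it (extracting a shorter $h$-decomposition from an identically-vanishing discriminant) is both unsubstantiated and different from the argument that actually works.
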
  

The proof uses the same ideas as the proof of Lemma $\ref{main2}$, though is a bit more involved.  In private correspondence with the author, Professor Leep has informed us that he believes he can show that the lower bound on $h_k(Y)$ is redundant.

\section{Proof of Lemma 1}\label{previous work}

First we introduce some notation.  Let $k$ be of degree $d$ over $\mathbb{Q}$, and let $\mathfrak{o}$ be the ring of integers of $k$ with $\mathbb{Z}$-basis $\omega_1, \dots , \omega_d$.  Let $\mathfrak{m}$ be an integral ideal of $\mathfrak{o}$, with $\mathbb{Z}$-basis $\tau_1, \dots , \tau_d$.

Define $\sigma_1, \dots \sigma_{d_1}$ to be the distinct real embeddings of $k$, and let $\sigma_{d_1+1}, \dots$ $ \sigma_{d_1 + 2d_2}$ be the distinct complex embeddings, such that $\sigma_{d_1+i}$ is conjugate to $\sigma_{d_1+d_2+i}$.  Put $k_i$ to be the completion of $k$ with respect to the embedding $\sigma_i$ for $i=1, \dots , d_1 + d_2$.

Define $V$ to be the commutative $\mathbb{R}$-algebra $\oplus^{d_1+d_2}_{i=1}k_{i}  \cong k \otimes_\mathbb{Q} \mathbb{R}$ which has dimension $d$. 
For an element $x \in V$, we write $\pi_i(x)$ for its projection onto the $i$th summand, (so $x= \oplus \pi_i(x)$).  There is a canonical embedding of $k$ into $V$ given by $\alpha \rightarrow \oplus\sigma_i(\alpha)$, and we identify $k$ with its image in $V$.  Under this image, $\mathfrak{m}$ forms a lattice in $V$, and $\tau_1, \dots , \tau_d$ form a real basis for $V$.

We define a distance function $| \cdot |_{\tau}$ on $V$ as follows: 
$$|x|_{\tau} = |x_1 \tau_1 + \cdots + x_d \tau_d|_{\tau} = \max\limits_{i} | x_i |.$$  This extends to $V^n$ in the obvious way: if $\textbf{x} = (x^{(1)}, \dots ,\ x^{(n)}) \in V^n,$ then
$$|\textbf{x}|_{\tau} =  \max\limits_{j} | x^{(j)} |_{\tau}.$$  We note that there will be some constant $c$, dependent only on $k$ and the choice of basis $\tau_1, \dots , \tau_d$, such that \beq\label{cdef} |\pi_i(x)| \leq c|x|_{\tau} \eeq for all $x \in V$ and $1 \leq i \leq d_1+d_2$ (since each $\pi_i$ is linear, this is clear). 
 
For any point $\textbf{b} \in V^n$, let $\mathfrak{B}(\textbf{b})$ be the box \begin{equation}\label{box2}\mathfrak{B}(\textbf{b}) = \{  \xx \in V^n : |\xx-\textbf{b}|_{\tau}<\rho/2  \},\end{equation} where $\rho$ will always be a real number $0 <\rho <1 $. 
 
For any set $\mathcal{{A}} \subset V^n$, and positive real number $P$, we define $P \mathcal{{A}}$ to be the set $\{ \xx \in V^n : P^{-1} \xx \in \mathcal{A} \}$.  Given a polynomial $\psi(x_1, \dots , x_n)$ defined over $k$, we shall be interested in the quantity: $$\mathcal{N}_{\psi, \mathcal{{A}}, \mathfrak{m}}( P ) = \# \{ \xx \in P\mathcal{{A}} \cap \mathfrak{m}^n : \psi(\xx)=0 \},$$ and its asymptotic behaviour as $P \rightarrow \infty$.

We can now state the generalisation of the main theorem of \cite{Pleasants} we shall use: 
\begin{thma}\label{Pleasantsmain} (Pleasants) Let $\mathfrak{m}$ be an integral ideal of $\mathfrak{o}$, and let $f(\textup{\textup{\xx}})$ be a cubic polynomial over $k$.  Suppose that $h_k(f) \geq 16,$ and that for every integral ideal $\mathfrak{a}$ of $\mathfrak{o}$, the congruence \begin{equation}\label{pleasantscong} f(\textup{\xx}) \equiv 0 \mod{\mathfrak{a}} \end{equation} has non-singular solutions in $\mathfrak{m}^n$.  Also, let $\boldsymbol{\zeta}_0 = \bigoplus_{i=1}^{d_1+d_2} \boldsymbol{\zeta}_i$ where each $\boldsymbol{\zeta}_i \in \pi_i(V)^n$ is a non-singular solution to $C(\textup{\xx})=0$.  Then there exists a set $\mathfrak{R} \subset V^n$ containing $\boldsymbol{\zeta}_0$, and a real constant $c_{f, \mathfrak{R}, \mathfrak{m}}>0$, such that $$\mathcal{N}_{f, \mathfrak{R}, \mathfrak{m}}( P ) = c_{f, \mathfrak{R}, \mathfrak{m}} P^{(n-3)d}+o(P^{(n-3)d}).$$ \end{thma}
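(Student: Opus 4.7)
The plan is to adapt Pleasants's circle method argument to the number field setting, exploiting the $V$-adic formalism set up above. Writing $e_V$ for a suitable additive character on $V$ and $\mathfrak{m}^{*}$ for the dual lattice associated to $\mathfrak{m}$, one has the integral representation
\[
\mathcal{N}_{f, \mathfrak{R}, \mathfrak{m}}(P) = \int_{T} S(\boldsymbol{\alpha})\, d\boldsymbol{\alpha}, \qquad S(\boldsymbol{\alpha}) = \sum_{\xx \in P\mathfrak{R} \cap \mm^n} e_V(\boldsymbol{\alpha} f(\xx)),
\]
with $T$ a fundamental domain for $V/\mathfrak{m}^{*}$. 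I would then dissect $T$ into major arcs around the $V$-rational points $\aaa/q$, with $q$ an integral ideal of bounded norm, and minor arcs on the complement.

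The minor arc estimate is the decisive quantitative input, and it is where the hypothesis $h_k(f)\geq 16$ enters. Following Pleasants, I would apply threefold Weyl differencing to $f$, reducing the problem to counting lattice points in a region cut out by the bilinear forms associated to the cubic part $C$ of $f$; the rank of this system is controlled by $h_k(f)$, and the threshold $16$ is precisely what is needed to produce a power saving strong enough that $\int_{\text{minor}} |S(\boldsymbol{\alpha})|\, d\boldsymbol{\alpha} = o(P^{(n-3)d})$. This closely mirrors Pleasants's estimates over $\mathbb{Q}$, the only adjustment being to replace the classical geometry-of-numbers bounds by their $V$-adic counterparts, tracking how implicit constants degrade under the projections $\pi_i$ appearing in \eqref{cdef}.

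The major arc analysis proceeds by approximating $S(\boldsymbol{\alpha})$ near each $\aaa/q$ by the product of a local exponential sum over $(\mm/q\mm)^n$ and an archimedean oscillatory integral over $\mathfrak{R}$; summing over the major arcs produces a main term $\mathfrak{S}\cdot \mathfrak{J}(\mathfrak{R}) \cdot P^{(n-3)d}$, with $\mathfrak{S}$ the singular series and $\mathfrak{J}(\mathfrak{R})$ the singular integral. Positivity of $\mathfrak{S}$ follows from the congruence hypothesis via Hensel lifting in each completion $k_{\mathfrak{p}}$, which guarantees a uniformly positive local density at every prime. For $\mathfrak{J}(\mathfrak{R})>0$ I would choose $\mathfrak{R} = \mathfrak{B}(\boldsymbol{\zeta}_0)$ with $\rho$ small enough that each component $\boldsymbol{\zeta}_i$ remains a non-singular zero of $C$ throughout the box; the singular integral then factors as a product over archimedean places, with each factor strictly positive by the implicit function theorem, giving $c_{f,\mathfrak{R},\mathfrak{m}}>0$. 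The principal obstacle is transporting the minor arc bound into the $V$-adic setting uniformly across all conjugate embeddings, but this requires no genuinely new ideas beyond careful bookkeeping atop the estimates of \cite{Pleasants}.
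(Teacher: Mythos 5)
Your sketch correctly identifies the overall circle-method architecture (major/minor arc dissection over a fundamental domain for $V/\mathfrak{m}^{*}$, Weyl differencing on the minor arcs with $h_k(f)\geq 16$ as the threshold, singular series positivity from the congruence hypothesis via Hensel's lemma, singular integral positivity by shrinking $\mathfrak{R}$ around the non-singular real/complex zero $\boldsymbol{\zeta}_0$). But it misreads where the actual work lies. Pleasants's paper \cite{Pleasants} is already set over an arbitrary algebraic number field $k$, in exactly the $V$-adic formalism the section introduces; the major/minor arc analysis, the geometry-of-numbers estimates over $V$, and the singular series/integral computations are all carried out there, proving the statement in the case $\mathfrak{m}=\mathfrak{o}$. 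So when you write ``adapt Pleasants's circle method argument to the number field setting'' and ``this closely mirrors Pleasants's estimates over $\mathbb{Q}$,'' you are proposing to redo a body of work that the cited reference has already done, and the phrase ``over $\mathbb{Q}$'' suggests you have not registered that the source is itself a number-field paper.

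The genuine content of the lemma is the much milder generalization from $\mathfrak{m}=\mathfrak{o}$ to an arbitrary integral ideal $\mathfrak{m}$, i.e.\ replacing one full-rank lattice in $V^n$ by another. The paper's proof is correspondingly short: observe that Pleasants's arguments (Lemmas 6.1, 7.1, 7.2, 7.4 of \cite{Pleasants}) depend only on $\mathfrak{m}$ being a lattice with a $\mathbb{Z}$-basis $\tau_1,\dots,\tau_d$, and that the analytic machinery over the algebra $V$, together with the algebraic number theory inputs of Section~4 of \cite{Pleasants}, is insensitive to whether that lattice is the ring of integers or a proper ideal. What should actually be checked is precisely this: that nothing in Pleasants's estimates uses the unit of $\mathfrak{o}$, the specific index of the lattice, or some other feature special to $\mathfrak{o}$. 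Your proposal, carried through honestly, would arrive at the same conclusion, but via re-deriving the entire theorem rather than auditing the single substitution $\mathfrak{o}^n \to \mathfrak{m}^n$.
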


\begin{proof}In the case where $\mathfrak{m}=\mathfrak{o}$, this is equivalent to Lemmas 6.1, 7.1, 7.2 and 7.4 of \cite{Pleasants}, which were proved using the circle method.  However all the arguments go through unchanged to prove the generalisation.  Indeed, if one just changes the words `integral points' to `elements of $\mathfrak{m}$', and `$\omega_1, \dots , \omega_d$' to `$\tau_1, \dots , \tau_d$' in the relevant places, essentially all the arguments work verbatim in the same way.  In terms of the circle method, the commutative algebra $V$ does not behave differently whether $\mathfrak{m}$ is the ring of integers or an arbitrary integral ideal, and the non-trivial algebraic number theory results required (section 4 of \cite{Pleasants}) were not specific to $\mathfrak{o}$. \end{proof}

It is straightforward to show that a suitable $\boldsymbol{\zeta}_0$ exists.  Thus Lemma $\ref{Pleasantsmain}$ shows that any such cubic polynomial has infinitely many solutions in $\mathfrak{m}^n$.

We now prove Lemma $\ref{main1}$.  Recall $Y$ is the hypersurface associated to a rational cubic form $C$, with $h_k(C) \geq 16$.  It is well known that if $Y$ is not a cone, and $C$ has at least 10 variables, the congruences $\eqref{pleasantscong}$ have non-singular solutions in $\mathfrak{o}^n$ for all ideals $\mathfrak{a}$ (see for example \cite{BirchLewis}).   Then it is clear upon taking $\mathfrak{m}=\mathfrak{o}$ in Lemma $\ref{Pleasantsmain}$, that $Y(k) \neq \emptyset$.  Furthermore this implies that $Y_{smooth}(k) \neq \emptyset$, as $Y$ is not a cone (see for example \cite[\txt{Theorem} 1.2]{Kollar}).  Suppose we are given $\eps>0$, any finite set of places $S$, and any set of non-singular points $\{ \xx_{\nu} = (x^{(1)}_\nu, \dots , x^{(n)}_\nu) \in Y_{smooth}(k_\nu): \nu \in S \}.$  To show that weak approximation holds for $Y_{smooth}$, it suffices to show there exists a point $\xx = (x^{(1)}, \dots , x^{(n)})\in Y_{smooth}(k)$ such that $|x^{(i)}-x^{(i)}_\nu|_{\nu} < \eps$ for each $i$, and every $\nu \in S$ (where $| \cdot |_\nu$ is the valuation with respect to $\nu$).  We follow the line of argument of \cite[\txt{Section 5}]{SK}.  

Let $\eps$, $S$, $\{ \xx_\nu \}_{\nu \in S}$ be given.  We may assume that $\txt{ord}_{\nu}(x^{(i)}_\nu) \geq 0$ for every $i$ and every $\nu \in S$.  Write $S=S_\infty \cup S_f$, where $S_\infty$ consists of infinite places and $S_f$ consists of finite places.  Without loss of generality we can assume that $S_\infty$ consists of all the infinite places of $k$, since there are only finitely many of them, and $Y_{smooth}(k_\nu) \supset Y_{smooth}(k) \neq \emptyset$ for all $\nu$.  

We can find $\aaa = (a^{(1)}, \dots, a^{(n)} ) \in \mathfrak{o}^n$ such that $|a^{(i)} - x^{(i)}_\nu|_\nu < \eps/3$ for all $i$ and $\nu \in S_f$ (by the Chinese Remainder Theorem).  Let $$r_\nu = \min_i \txt{ord}_\nu \{ a^{(i)} - x^{(i)}_\nu \},$$ and let $\mathfrak{p}_\nu$ be the prime ideal corresponding to $\nu$.  Put $$\mathfrak{m}= \prod_{\nu \in S_f} \mathfrak{p}_\nu^{r_\nu}.$$  

Consider $f(\xx)=C(\xx + \aaa)$, a cubic polynomial defined over $k$.  Let $t$ be a positive integer.  Choose $D \equiv 1 \pmod{\mathfrak{m}^t}$ to be a positive integer such that $$D > \frac{2c}{\eps},$$ with $c$ as in $\eqref{cdef}$.  For each infinite place $\nu$, let $$\rrr_\nu= D \xx_\nu.$$  Put $$\boldsymbol{\zeta}_0 = \bigoplus_{i=1}^{d_1+d_2} \rrr_{\nu_i} \in V^n,$$ where $\nu_i$ is the infinite place corresponding to the embedding $\sigma_i$.  Note that $\boldsymbol{\zeta}_0$ satisfies the conditions of Lemma $\ref{Pleasantsmain}$.  Take a set $\mathfrak{R}$ as in Lemma $\ref{Pleasantsmain}$, centred at $\boldsymbol{\zeta}_0$.  In \cite{Pleasants}, the region $\mathfrak{R}$ is essentially a box-like shape, and the only extra condition it needs to have is that it is sufficiently small.  Therefore we can take its `diameter' with respect to $|\cdot|_\tau$ to be as small as we like, and we can assume it is contained inside a box of side length $\rho<1$ as in $\eqref{box2}$.  

We consider the congruence conditions $\eqref{pleasantscong}$.  For any finite place $\nu \not\in S_f$, we have $\mathfrak{o}_\nu = \mathfrak{m}_\nu$, and so any point in $Y_{smooth}(k_\nu)$ will give rise to a non-singular solution in $\mathfrak{m}_\nu^n$ of $f(\xx)=0$.  On the other hand, if $\nu \in S_f$, $\xx_\nu-\aaa \in \mathfrak{m}_\nu^n$ is a non-singular solution to $f(\xx)=0$.  Thus the conditions hold for all integral ideals $\mathfrak{a}$ in $k$, by the Chinese remainder theorem. 

Finally, we note that the cubic part of $f$ is just $C$, and  $h_k(C) \geq 16.$   Thus the conditions for Lemma $\ref{Pleasantsmain}$ are met.  Hence for a sufficiently large integer $P \equiv 1 \pmod{\mathfrak{m}^t}$, there exists a point $\yy \in \mathfrak{m}^n \cap  P \mathfrak{R}$ which is a zero of $f$, and thus $\zz = \yy + \aaa$ is a zero of $C$.  Also, since we can have arbitrarily many such points, we can choose one such that $\zz \neq\textbf{0}$.  We now fix our point $\xx$ to be $\frac{\zz}{DP}$.

For $\nu \in S_ \infty$ we have: $$|DP x_\nu^{(i)} -  y^{(i)}|_{\nu} \leq \rho c P < cP,$$ whence \begin{equation*}\begin{aligned}
\big| x_\nu^{(i)} - x^{(i)} \big|_{\nu} &= \big| x_\nu^{(i)} - \frac{z^{(i)}}{DP} \big|_{\nu} \\ &\leq \big|x_\nu^{(i)} - \frac{ y^{(i)}}{DP}\big|_{\nu} + \frac{|a^{(i)}|_{\nu}}{DP} \\
&\leq \frac{c}{D} + \frac{|a^{(i)}|_{\nu}}{DP} \\
&< \eps
\end{aligned}
\end{equation*} for $P$ sufficiently large.

For every $\nu \in S_f$ we have \begin{align*} \big|x_\nu^{(i)} - x^{(i)} \big|_\nu &= \big| x_\nu^{(i)} - \frac{z^{(i)}}{DP} \big|_\nu \\ &\leq |x_\nu^{(i)}-z^{(i)}|_\nu + \frac{|DP-1|_\nu}{|DP|_\nu}|z^{(i)}|_\nu \\
&\leq |x_\nu^{(i)}-z^{(i)}|_\nu + 2^{-t} \\
&= |(z^{(i)}-a^{(i)}) - (x_\nu^{(i)}-a^{(i)})|_\nu + 2^{-t} \\
&\leq \frac{2 \eps}{3} + 2^{-t} < \eps,
\end{align*}
when $DP \equiv 1 \pmod{M^t}$ for sufficiently large $t$.  Finally we note that by taking $\eps$ sufficiently small, we can make $\xx$ be arbitrarily close to a non-singular point on $Y(k_\nu)$ for each $\nu \in S$.  In this way we may clearly assume that $\xx \in Y_{smooth}(k)$.  This proves Lemma $\ref{main1}$.

\section{Proof of Lemma 2}\label{Lemma 2}

Throughout this section we shall suppose that $h = h_k(Y)$, and that $n \geq h + 5$.  In fact, for simplification, we shall only consider the case $n = h+5$, other cases being handled similarly.  After a change of variables if necessary we can express the cubic form $C$ in terms of variables $(\xx, \yy)= (x_1 \dots , x_5 , y_1, \dots, y_h)$ as follows, \begin{align*}Y : C(\xx , \yy) = y_1 Q_1(x_1, \dots ,x_5 , y_1, &\dots , y_h) + \nonumber \\ \cdots + &y_h Q_h(x_1, \dots ,x_{5} , y_1, \dots , y_h)=0, \end{align*} where the $Q_i$ are quadratic forms defined over $k$.  Clearly $Y(k) \neq \emptyset$ since $h < n$, so $Y_{smooth}(k) \neq \emptyset$ as in the proof of Lemma $\ref{main1}$.

Consider the 4 dimensional linear space $L$ given by $y_1 = \cdots  = y_h = 0$.  Take the blow-up $W$ of $Y$ along $L$.  Let $z_1 , \dots, z_h$ be coordinates for $\mathbb{A}_k^{h}$.  Then the variety given by the vanishing of $C$ and $y_i z_j - y_j z_i$ for all $i,j \in \{1, \dots , h \}$, is $L \cup W$.  Our plan is to prove weak approximation for $W_{smooth}$, which by birationality, will prove it for $Y_{smooth}$.  

Let $\pi:W \rightarrow \mathbb{A}_k^{h}$ be the projection $(\xx, \yy , \zz) \mapsto \zz$.  The generic fibres of this map are quadrics in $\mathbb{P}_k^5$.  Abusing our notation slightly, suppose that for $\yy \in \mathbb{A}_k^h \setminus \{ \textbf{0} \}$, the fibre of $W/\mathbb{A}_k^h$ at the point $\yy$ is given by the quadric \beq\label{Qa}X_\yy: Q_\yy(\xx,t) = 0.\eeq  Then we see that \beq\label{Qa2}Q_\yy(\xx,t) t = C(\xx,\yy t) .\eeq 

For an alternative description of the fibres, consider a generic 5 dimensional linear space $L_5$ which contains $L$.  Then this cuts out on $Y$ the union of $L$ and one such quadric $Q$.

\begin{thma}\label{CSSD}  Let $X$ be a smooth geometrically integral variety such that $X(k) \neq \emptyset$ and $X$ satisfies weak approximation.  Let $Z / X$ be a fibration in quadrics such that the generic fibre has underlying rank at least 5.  Then weak approximation holds for any smooth model of $Z.$  \end{thma}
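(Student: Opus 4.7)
The plan is to reduce weak approximation on a smooth model $\widetilde Z$ of $Z$ to the conjunction of weak approximation on $X$ (given by hypothesis) with Hasse's theorem for quadrics of rank at least $5$ (recalled in Section $2$ in the discussion preceding Lemma $\ref{main2}$). Since weak approximation is a birational invariant for smooth geometrically integral varieties, it suffices to verify it on a single convenient $\widetilde Z$, which by resolution of singularities I take to be an isomorphism to $Z$ over a suitable dense open.

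Fix a finite set of places $S$, enlarged to contain every archimedean place of $k$, points $\widetilde z_\nu\in\widetilde Z(k_\nu)$ for $\nu\in S$, and for each $\nu\in S$ an open neighbourhood $\Omega_\nu\subseteq \widetilde Z(k_\nu)$ of $\widetilde z_\nu$. Generic smoothness combined with the rank hypothesis on the generic fibre yields a dense open $U\subseteq X$ such that the restricted fibration $\pi^{-1}(U)\to U$ is smooth with geometric fibres smooth quadrics of rank at least $5$; by the choice of $\widetilde Z$, the open $V:=\pi^{-1}(U)$ may be viewed as an open subset of $\widetilde Z$. Because $V$ is Zariski-dense and $\widetilde Z$ is smooth, the $k_\nu$-points of the closed complement $\widetilde Z\setminus V$ have empty interior in $\widetilde Z(k_\nu)$, so $V(k_\nu)$ is $\nu$-adically dense in $\widetilde Z(k_\nu)$; in particular each $\widetilde z_\nu$ can be replaced by $z'_\nu\in V(k_\nu)\cap \Omega_\nu$. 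Set $x_\nu:=\pi(z'_\nu)\in U(k_\nu)$. Weak approximation on $X$ furnishes $x\in X(k)$ arbitrarily close to each $x_\nu$, and openness of $U(k_\nu)\subseteq X(k_\nu)$ permits us to insist on $x\in U(k)$. The smoothness of $\pi$ over $U$ makes the induced map on $k_\nu$-points a submersion of analytic manifolds, so for $x$ sufficiently close to $x_\nu$ the implicit function theorem delivers a point $z''_\nu\in Z_x(k_\nu)\cap \Omega_\nu$. The fibre $Z_x$ is then a smooth quadric over $k$ of rank at least $5$ possessing $k_\nu$-points at every $\nu\in S$, and hence at every archimedean place. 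Hasse's theorem for such quadrics produces $z\in Z_x(k)\subseteq V(k)\subseteq \widetilde Z(k)$ with $z\in \Omega_\nu$ for every $\nu\in S$, which is the desired global approximation.

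The principal obstacle is not conceptual but organisational: the four successive approximations (perturbation into $V$, approximation of the base in $X$, lift to the fibre $Z_x$ via the implicit function theorem, and the final Hasse approximation on $Z_x$) must be nested so that each intermediate neighbourhood remains inside the original $\Omega_\nu$. The hypothesis of rank at least $5$ is indispensable at two points of the argument: it guarantees a dense open $U$ on which every geometric fibre is a smooth quadric, and, crucially, it removes the non-archimedean local obstructions that would otherwise prevent Hasse's theorem from closing the argument on $Z_x$.
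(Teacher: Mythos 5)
Your strategy is the fibration method that the paper itself invokes by citing Proposition 3.9 of Colliot-Th\'el\`ene--Sansuc--Swinnerton-Dyer, so the overall approach is right; but there is a genuine gap at exactly the point the paper flags as ``the only real difference'': the fibres may be singular. The hypothesis is that the generic fibre has \emph{rank} at least $5$, not that it is smooth. In the intended application (Lemma~\ref{main2}) the fibres $X_\yy$ are quadrics in $\mathbb{P}^5$ cut out by a form $Q_\yy$ in six variables; when $Q_\yy$ has rank exactly $5$ these fibres are singular quadric cones. For such a $Z$ there is no open $U\subseteq X$ over which $\pi^{-1}(U)\to U$ is a smooth morphism, so your assertion that generic smoothness together with the rank hypothesis produces fibres that are ``smooth quadrics of rank at least $5$'' fails. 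Correspondingly, $V:=\pi^{-1}(U)$ cannot then be viewed as an open subset of $\widetilde Z$, since the resolution $\widetilde Z\to Z$ is not an isomorphism along the singular locus inside each fibre.

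The repair is small but must be made explicitly, since it is exactly what the paper is gesturing at. Apply generic smoothness to $Z_{smooth}\to X$ rather than to $Z\to X$, and set $V:=Z_{smooth}\cap\pi^{-1}(U)$; this $V$ genuinely identifies with an open subset of $\widetilde Z$, the morphism $\pi|_V$ is smooth, and each fibre $V_x$ is a dense open of the smooth locus $(Z_x)_{smooth}$ of the (possibly singular) quadric $Z_x$ rather than the quadric itself. Hasse's theorem, in the form quoted in Section~2, gives the Hasse principle and weak approximation for $(Z_x)_{smooth}$ whenever the underlying quadratic form has rank at least $5$ and $(Z_x)_{smooth}$ has real points at every real place --- it is the smooth locus, not the whole quadric, to which Hasse applies. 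With $(Z_x)_{smooth}$ substituted for $Z_x$ throughout, your nested approximations --- perturbation into $V$, approximation on the base, the implicit-function-theorem lift into $V_x(k_\nu)$, and the final Hasse approximation on $(Z_x)_{smooth}$ --- close as you intend.
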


\begin{proof}This is almost exactly \cite[\txt{Proposition} 3.9]{Colliot}, and the proof follows in the same manner.  The only real difference is that our fibres may be singular.  We leave it as an exercise to the reader to convince themselves that this is only a minor issue.  \end{proof}

Now we can reduce Lemma $\ref{main2}$ to the following.

\begin{thma}\label{second2}  Suppose $Y$ is as in Lemma $\ref{main2}$.  For $\textup{\yy} \in \mathbb{A}_k^h$, we define $Q_\yy$ as in $\eqref{Qa}$.  Either $Q_\yy$ is generically of rank at least 5, or $Y_{sing}(k) \neq \emptyset$.  \end{thma}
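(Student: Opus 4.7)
The plan is to show the contrapositive: if $Q_\yy$ has generic rank at most $4$, then $Y_{sing}(k) \neq \emptyset$. The strategy is to exhibit a $k$-rational singular point on the $\mathbb{P}^4$ given by $L = \{y_1 = \cdots = y_h = 0\}$. Since every term of $C$ involves some $y_i$, $L$ is automatically contained in $Y$; and for $\xx^* \in k^5$, the conditions $\partial C / \partial x_j(\xx^*, \mathbf{0}) = 0$ hold trivially, while $\partial C/\partial y_k(\xx^*, \mathbf 0) = Q_k(\xx^*, \mathbf 0) = A_k(\xx^*)$, where $A_k(\xx) := Q_k(\xx, \mathbf{0})$. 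Thus the task reduces to producing a nonzero $\xx^* \in k^5$ with $A_k(\xx^*) = 0$ for all $k$.

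The first step extracts information from the rank hypothesis. Let $M_{A_i}$ denote the symmetric Gram matrix of $A_i$ and set $M_A(\yy) := \sum_i y_i M_{A_i}$, the corresponding linear pencil of symmetric $5 \times 5$ matrices. From $Q_\yy(\xx,t) = C(\xx, \yy t)/t$ one sees that $M_A(\yy)$ is precisely the $(\xx, \xx)$-block of the $6 \times 6$ Gram matrix $M(\yy)$ of $Q_\yy$. As a principal submatrix, $\mathrm{rank}_{k(\yy)} M_A(\yy) \leq \mathrm{rank}_{k(\yy)} M(\yy) \leq 4$, so $\ker M_A(\yy)$ has positive dimension over $k(\yy)$. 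Picking a nonzero element and clearing denominators, I obtain $\xi \in k[\yy]^5 \setminus \{\mathbf 0\}$ satisfying the polynomial identity $M_A(\yy)\,\xi(\yy) \equiv 0$.

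The key step is then a one-line differentiation. Differentiating the identity $M_A(\yy)\,\xi(\yy) = 0$ with respect to $y_k$ yields $M_{A_k}\,\xi(\yy) + M_A(\yy)\,\partial\xi/\partial y_k = 0$; left-multiplying by $\xi(\yy)^T$ and using that $\xi(\yy)^T M_A(\yy) = (M_A(\yy)\,\xi(\yy))^T = 0$ by symmetry kills the second term, leaving
\[
A_k(\xi(\yy)) = \xi(\yy)^T M_{A_k}\,\xi(\yy) \equiv 0
\]
as a polynomial in $\yy$, for every $k$. Since $\xi$ is a nonzero polynomial vector and $k$ is infinite, some specialization $\yy_0 \in k^h$ gives $\xi(\yy_0) \neq \mathbf 0$; the point $(\xi(\yy_0), \mathbf 0) \in \mathbb{P}^{n-1}(k)$ then lies in $L \cap Y_{sing}(k)$, as required.

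I anticipate no serious obstacle, aside from the routine check that a $k(\yy)$-kernel vector of $M_A(\yy)$ yields a polynomial one after clearing denominators. Notably, this argument uses neither the geometrically integral nor the non-conical hypotheses: these are needed elsewhere (in applying Theorem~\ref{CSSD} and in passing from $Y_{sing}(k) \neq \emptyset$ to $k$-rationality of $Y$), but the dichotomy of Lemma~\ref{second2} itself is a purely matrix-theoretic statement.
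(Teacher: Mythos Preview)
Your argument is correct, and it takes a genuinely different route from the paper's. Both proofs identify the same candidate singular locus --- a point $(\xx^*,\mathbf{0}) \in L$ with $Q_k(\xx^*,\mathbf{0})=0$ for all $k$ --- but they reach it by different means. The paper assumes $Y_{sing}(k)=\emptyset$, deduces that under every $k$-linear change of the $\xx$-variables each diagonal entry of some $M_i$ is nonzero, and then proves a separate inductive linear-algebra lemma (Lemma~\ref{s-5}) showing that such a pencil $\sum_i y_i M_i$ is generically nonsingular; the induction diagonalises one matrix and uses a large-parameter Laplace expansion. Your differentiation trick --- take a polynomial kernel vector $\xi(\yy)$ of the generically singular pencil, differentiate $M_A(\yy)\xi(\yy)=0$ in $y_k$, and pair against $\xi(\yy)^T$ --- bypasses that induction entirely and in effect gives a one-line proof of Lemma~\ref{s-5}. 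The paper's approach has the advantage that Lemma~\ref{s-5} is packaged for reuse (and is indeed reused in the proof of Lemma~\ref{main3}, applied to the $3\times 3$ minors); your approach is shorter, more conceptual, and exposes clearly that the dichotomy is a pure statement about symmetric matrix pencils, independent of the geometric hypotheses on~$Y$.
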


If $Q_\yy$ is generically of rank at least 5, we apply Lemma $\ref{CSSD}$ with $Z=W$, $X=\mathbb{A}_k^h$ to prove weak approximation for $W_{smooth}$, and hence $Y_{smooth}$.  In the alternative, $Y_{sing}(k) \neq \emptyset$, which is sufficient for weak approximation as noted in the introduction.  

We now establish Lemma $\ref{second2}$.  Suppose that $Y_{sing}(k) = \emptyset$.  We may write \begin{align*}Y : C(\xx, \yy) = \sum_{i=1}^h y_i  Q_i(\xx) + 2 \sum_{j=1}^5 x_j q_j(\yy) + c(\yy), \end{align*} where $Q_i, q_j$ are quadratic forms, and $c$ is a cubic form.  Then by $\eqref{Qa2}$, $Q_\yy$ takes the form \begin{align}\label{Qform}Q_\yy(\xx,t) =  \sum_{i=1}^h y_i Q_i(\xx) + 2 \sum_{j=1}^5 x_j q_j(\yy) t + c(\yy) t^2. \end{align}  We rewrite this in terms of a $6 \times 6$ matrix $A = A(\yy)$, defined by \beq\label{Adef} Q_\yy(\xx,t) = \, ^T(\xx, t)A(\xx, t) \nonumber. \eeq  Also we consider the $5 \times 5$ submatrices $M_i$ given by $Q_i(\xx)= \, ^T \xx M_i \xx .$

By $\eqref{Qform}$, $A$ must take the shape \beq\label{Adef1} A(\yy) =
\left(\begin{array}{cccccc}
 &   &  & \vline & q_1(\yy)   \\
 & \sum_i y_i M_i &  & \vline & \vdots  \\
 &   &  & \vline & q_5(\yy)     \\
\hline 
q_1(\yy) & \cdots & q_5(\yy) & \vline  & c(\yy) &    \\

\end{array}\right).
\eeq

We shall show that generic $\yy$ produces $\txt{rk}[A(\yy)] \geq 5$.
 
Due to its simple structure, we shall focus all our attention on the leading $5 \times 5 $ submatrix $M = M(\yy):=  \sum_i y_i M_i$, and ignore the terms which are quadratic or cubic in $\yy$ (we shall need to consider these more carefully in proving Lemma $\ref{main3}$).  Clearly $\txt{rk}(A) \geq \txt{rk}(M)$, so it suffices to show $\txt{rk}(M) = 5$, i.e. $\det (M) \neq 0$, generically.

Suppose there is some $\xx$ variable, $x_1$ say, so that none the forms $Q_i$ vary with $x_1$.  This is problematic, since then $ \txt{rk}( \sum_{i=1}^h y_i M_i) < 5$ for any choice of $\yy$.  In fact, let's merely assume that there is no $x_1^2$ term in any of the $Q_i$'s.  This implies that all terms in $C(\xx,\yy)$ are at most linear in $x_1$.  But then $Y_{sing}(k)\neq \emptyset$ , since it contains the point  $(1 , 0, \dots , 0)$, which is a contradiction.  Thus we can assume that there is an $x_j^2$ term contained in at least one of the $Q_i$'s, for every $j$, and that this is true under any non-singular $k$-rational change of variables acting on $\xx$.  Recall that a non-singular change of variables $\xx' = B \xx$ corresponds to a congruent transformation for each $M_i$: $M'_i = B^T M_i B$.  Thus we have reduced Lemma $\ref{second2}$ entirely to the following linear algebra result.
 
\begin{thma}\label{s-5}Suppose we have a set of $\rho \times \rho$ symmetric, $k$-rational matrices $M_1 , \dots, M_\mu$ such that the $i$-th diagonal entry is non-zero for at least one of the matrices, for $i \in \{1, \dots , \rho\}.$  Furthermore, this remains true under any $k$-rational non-singular congruent transformation acting on all the matrices.  Then for generic $\yy \in \mathbb{A}_k^\mu$, we have that $\det ( \sum_{j=1}^\mu y_j M_j) \neq 0. $  \end{thma}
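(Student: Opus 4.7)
The plan is to prove the contrapositive. The first step is to reformulate the hypothesis in more useful terms: since $(B^T M_j B)_{ii} = (Be_i)^T M_j (Be_i)$ and the vectors $Be_i$ range over all nonzero elements of $k^\rho$ as $B \in GL_\rho(k)$ and $i$ vary, the congruence-invariant diagonal non-vanishing assumption is equivalent to the statement that the quadratic forms $Q_j(v) := v^T M_j v$ admit no common $k$-rational zero other than the origin. It then suffices to show: if $\det(\sum_j y_j M_j) \equiv 0$ as a polynomial in $y$, then there exists a nonzero $v \in k^\rho$ with $Q_j(v) = 0$ for every $j$.

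Assume $\det(\sum_j y_j M_j) \equiv 0$, and let $\mathcal{M}$ denote the $k$-linear span of $M_1, \ldots, M_\mu$. Every element of $\mathcal{M}$ is singular, so the maximum rank $r$ attained on $\mathcal{M}$ satisfies $r < \rho$ (the case $r = 0$ would force $\mathcal{M} = 0$, which already contradicts the reformulated hypothesis). Pick $N \in \mathcal{M}$ of rank $r$. Since $k$ has characteristic $\neq 2$, Lagrange diagonalisation of the symmetric form $N$ supplies a $B \in GL_\rho(k)$ with $B^T N B = \txt{diag}(d_1, \ldots, d_r, 0, \ldots, 0)$ for some $d_1, \ldots, d_r \in k^\times$. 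Both the reformulated hypothesis and the identity $\det(\sum_j y_j M_j) \equiv 0$ are preserved under the substitution $M_j \mapsto B^T M_j B$, so I may assume $N$ is already in this diagonal form.

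For an arbitrary $M \in \mathcal{M}$, decompose $M = \bigl(\begin{smallmatrix} A & b \\ b^T & C \end{smallmatrix}\bigr)$ with $A$ of size $r \times r$ and $C$ of size $(\rho-r)\times(\rho-r)$. Since $N + \epsilon M \in \mathcal{M}$ has rank at most $r$ for every scalar $\epsilon$, every $(r+1)\times(r+1)$ minor of $N + \epsilon M$ vanishes identically as a polynomial in $\epsilon$. The main computation is to extract the leading coefficient in $\epsilon$ from two carefully chosen principal $(r+1)$-minors, using a Schur-complement expansion against the top-left block $\txt{diag}(d_1,\ldots,d_r) + \epsilon A$, which is invertible as a matrix over $k(\epsilon)$. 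The minor indexed by rows and columns $\{1, \ldots, r, r+s\}$ has leading term proportional to $d_1 \cdots d_r \cdot C_{ss} \cdot \epsilon$, forcing $C_{ss} = 0$. A similar expansion for the minor indexed by $\{1, \ldots, r-1, r+s, r+t\}$ with $s \neq t$ yields leading term proportional to $d_1 \cdots d_{r-1} (C_{ss} C_{tt} - C_{st}^2) \cdot \epsilon^2$, which combined with $C_{ss} = 0$ forces $C_{st} = 0$. Hence $C = 0$ for every $M \in \mathcal{M}$.

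It then follows that $Q_j(e_s) = (M_j)_{ss} = 0$ for every $j$ and every index $s > r$, so that $e_{r+1}$ is a nonzero $k$-rational common zero of the $Q_j$, contradicting the reformulated hypothesis. This proves $\det(\sum_j y_j M_j) \not\equiv 0$. The main obstacle in implementing this plan is the block-matrix/Schur-complement bookkeeping in the third paragraph; once the two minor families are laid out, extracting the leading-order coefficient in $\epsilon$ is a routine calculation.
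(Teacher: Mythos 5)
Your proof is correct, and it follows a genuinely different path from the paper's. You first translate the congruence-invariant diagonal condition into the cleaner statement that the associated quadratic forms $Q_j(v) = v^T M_j v$ have no common nonzero $k$-rational zero, then argue by contrapositive: assuming $\det(\sum_j y_j M_j) \equiv 0$, you pick a maximal-rank element of the span, diagonalise it, and use vanishing of $(r+1)\times(r+1)$ minors of $N + \epsilon M$ together with a Schur-complement expansion to force the bottom-right block of every $M$ to vanish, producing a common isotropic vector $e_{r+1}$. I checked the two minor computations and they give exactly the leading coefficients you state. The paper instead runs an induction on $\rho$: it diagonalises one matrix $M_1$ with the nonzero entries placed in the trailing block, applies the inductive hypothesis to the leading $(\rho - r)\times(\rho - r)$ corners of $M_2, \dots, M_\mu$, and then takes $\yy = (P, b_2, \dots, b_\mu)$ with $P$ large so that a Laplace expansion isolates the inductive determinant as the leading $P^r$-term. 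Your argument is non-inductive and makes the real content of the hypothesis explicit (it is a statement about the absence of a common $k$-point on the quadrics $Q_j = 0$, equivalently a structure result for $k$-linear spans of symmetric matrices of bounded rank); the paper's version is more directly constructive, exhibiting a concrete region of $\yy$'s on which the determinant is nonzero. Both buy the same conclusion, and your reformulation at the start is arguably the conceptually sharper way to see why the congruence-stability clause is needed.
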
 

\begin{proof}

A first observation is that equation $\det ( \sum^\mu_{j=1} y_j M_j) =0$ defines an algebraic set, $\mathcal{A}$ say, in $\mathbb{A}_k^\mu$, so the object of interest is then the set $\mathbb{A}_k^\mu \setminus \mathcal{A}$.  This is either empty or Zariski dense and open, hence we only need to show the existence of one $\yy \in \mathbb{A}_k^\mu \setminus \mathcal{A}$.

To do this we shall use induction on $\rho$, the $\rho = 1$ case being trivial.  We suppose that $\rho > 1$, and that $\mu >1$, since if we have $\mu=1$, then $M_1$ must have full rank, and we are done.  

Clearly the matrices $M_i$ cannot all be 0, suppose $M_1 \neq 0$.  Recall from the classical theory of quadratic forms that a symmetric matrix defined over $k$ is congruent over $k$ to a diagonal matrix.  Therefore without loss of generality, we assume that $M_1$ is in diagonal form, $$M_1 =
\left(\begin{array}{cccccc}
0 &  & & &    \\
 &\ddots & & &   \\
 & & 0 & &     \\
 & & &\lambda_1 &   \\ 

 & & &  &\ddots &    \\
 & & &  & &\lambda_r   \\

\end{array}\right),
$$ where $1 \leq  r = \txt{rk}(M_1)$, $\lambda_i \neq 0$, and all other entries are 0.  Now consider the leading $\rho-r \times \rho-r$ leading submatrices of $M_j$ for $j \geq 2$, which we will denote $M'_j$.  These are all symmetric and defined over $k$.  Furthermore, by the assumption of the lemma, for each $i = 1, \dots \rho-r$, we know that at least one $M'_j$ has its $i$th diagonal entry being non-zero, and that this is true under any rational congruent transformation.  Hence by our inductive hypothesis, for a generic $(y_2 , \dots , y_\mu) \in \mathbb{A}_k^{\mu-1},$ $\det ( \sum_{j=2}^\mu y_j M'_j) \neq 0. $ Let $(b_2, \dots , b_\mu)$ be one such generic vector, and put $M' = \sum_{j=2}^\mu b_j M'_j$.  Suppose $B = \max_i |b_i|$, where here $|\cdot|$ just denotes the usual complex absolute value.  Let $\| \cdot \|$ be the max norm for matrices, i.e if $A = \{ a_{ij} \}$, then $\| A \| = \max\{|a_{ij}| \}$.  Then define $N= \max_{j} \| M_j \|$.

Now consider the matrix $M = P M_1 + \sum_{j=2}^\mu b_j M_j$, where $P$ is a positive real number.  By Laplace expansion, $$\det(M) = \lambda_1 \cdots \lambda_r P^r \det (M') + O_{B,N} \left(P^{r-1} \right).$$ Since $\lambda_1 \cdots \lambda_r \det ( M') \neq 0,$ we see that if we take $P$ sufficiently large, the determinant will be non-zero. \end{proof}

Thus the proof of Lemma $\ref{main2}$ is complete.  

\section{Proof of Lemma 3}  Throughout this section we shall suppose that $h = h_k(Y)$, and that $11 \leq h = n-4$.  We can essentially repeat the argument of Section $\ref{Lemma 2}$ verbatim up to the statement of Lemma $\ref{second2}$.  The structure of $Q_\yy(\xx,t)$ is the same here, except now there are only four $\xx$ variables, \begin{align}\label{Qform2}Q_\yy(\xx,t) =  \sum_{i=1}^h y_i Q_i(\xx) + 2 \sum_{j=1}^4 x_j q_j(\yy) t + c(\yy) t^2. \end{align}  Then Lemma $\ref{main3}$ follows from the following Lemma, the proof of which will make up the remainder of the section.

\begin{thma}\label{second3}  Suppose $Y$ is as in Lemma $\ref{main3}$.  For $\textup{\yy} \in \mathbb{A}_k^h$, we define $Q_\yy$ as in $\eqref{Qform2}$.  Either $Q_\yy$ is generically of full rank, or $Y_{sing}(k) \neq \emptyset$.  \end{thma}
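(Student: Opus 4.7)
The plan is to prove the contrapositive: assuming $Y_{sing}(k) = \emptyset$, I would show that $\det A(\yy) \not\equiv 0$, where $A(\yy)$ is the $5 \times 5$ symmetric matrix of the quadratic form $Q_\yy(\xx, t)$. Since the generic rank of $Q_\yy$ equals the generic rank of $A(\yy)$, this is precisely the ``generically of full rank'' statement we need.

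The first part of the argument is essentially copied from Section \ref{Lemma 2}. Write $M(\yy) = \sum_{i=1}^h y_i M_i$ for the leading $4 \times 4$ block of $A(\yy)$. If some diagonal entry could be simultaneously annihilated in every $M_i$ by a single $k$-rational congruence on $\xx$, then $(1, 0, \dots, 0) \in k^n$ would be a $k$-rational singular point of $Y$, against assumption. Hence the hypotheses of Lemma \ref{s-5} hold (with $\rho = 4$, $\mu = h$) and $\det M(\yy) \not\equiv 0$.

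The new ingredient is to boost $\det M \not\equiv 0$ to $\det A \not\equiv 0$, since here full rank means $5$, not merely rank at least $4$. Suppose for contradiction that $\det A(\yy) \equiv 0$. Since $k$ is infinite, pick $\yy_0 \in k^h$ with $\det M(\yy_0) \neq 0$; then $A(\yy_0)$ has rank exactly $4$, and a Schur-complement calculation gives its kernel as the span of $v := (\xx_0, 1)^T$ with $\xx_0 := -M(\yy_0)^{-1} q(\yy_0) \in k^4$. I claim that $p := (\xx_0, \yy_0) \in k^n \setminus \{\textbf{0}\}$ is a singular point of $Y$, yielding the desired contradiction.

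The $x_k$-partials of $C$ at $p$ are the top four entries of $2 A(\yy_0) v$ and so vanish. For the $y_l$-partials, observe that $C(\xx, \yy) = (\xx, 1) A(\yy) (\xx, 1)^T$, so $\partial_{y_l} C(p) = v^T A_l(\yy_0) v$ where $A_l := \partial_{y_l} A$. To force this bilinear form to vanish I would differentiate the polynomial identity $\det A(\yy) \equiv 0$ through Jacobi's formula, which gives $\txt{tr}(\txt{adj}(A(\yy_0)) A_l(\yy_0)) = 0$ for each $l$; since $A(\yy_0)$ is symmetric of corank exactly one, $\txt{adj}(A(\yy_0)) = \beta v v^T$ for some $\beta \neq 0$, and the trace collapses to $\beta v^T A_l(\yy_0) v$. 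Hence $v^T A_l(\yy_0) v = 0$ for every $l$, finishing the claim. The main obstacle is exactly this last identification --- pulling the $y$-gradient of $C$ out of the corank-one adjugate structure of $A(\yy_0)$; once that device is available the rest is bookkeeping. I note in passing that the argument as sketched does not appear to use the hypothesis $h \geq 11$, which is consistent with Professor Leep's remark quoted in the paper.
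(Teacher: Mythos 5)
Your proof is correct, and it takes a genuinely different route from the paper. Both arguments begin identically: assuming $Y_{sing}(k) = \emptyset$, the singular-point observation from Section \ref{Lemma 2} shows that the diagonal hypothesis of Lemma \ref{s-5} holds for the $4 \times 4$ matrices $M_i$, so $\det M(\yy) \not\equiv 0$. After that the paths diverge. The paper boosts from rank $4$ to rank $5$ by first replacing $C$ with a strongly equivalent form in which $M_i = 0$ for at least $h-10$ indices (this is where $h \geq 11$ enters), then splitting into two cases according to whether the residual $q^\star_i$ all vanish, and in each case running a one-parameter scaling $\yy \mapsto A(P\cdot)$ together with a Laplace expansion to isolate the leading power of $P$. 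Your argument avoids all of this: you suppose $\det A(\yy) \equiv 0$, evaluate at a $\yy_0$ with $\det M(\yy_0)\neq 0$ to force $A(\yy_0)$ to have corank exactly one with kernel spanned by a $k$-rational $v = (\xx_0,1)^T$ via the Schur complement, and then use Jacobi's formula $\partial_{y_l}\det A = \operatorname{tr}(\operatorname{adj}(A)\,\partial_{y_l} A)$ together with the corank-one identity $\operatorname{adj}(A(\yy_0)) = \beta v v^T$ ($\beta\neq 0$, valid since $A$ is symmetric so row and column kernels coincide) to conclude $v^T (\partial_{y_l}A)(\yy_0)\, v = 0$ for every $l$. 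Combined with $A(\yy_0)v = 0$, this makes $(\xx_0,\yy_0)$ a $k$-rational singular point of $Y$ (nonzero because $\yy_0\neq\textbf{0}$), contradicting the assumption. The computations check out: $C(\xx,\yy) = (\xx,1)^T A(\yy)(\xx,1)$ so the $x$-partials at $(\xx_0,\yy_0)$ are $2[A(\yy_0)v]_k = 0$ and the $y$-partials are exactly $v^T A_l(\yy_0) v$. Your approach is cleaner, and as you note it nowhere uses $h \geq 11$; this gives a concrete realization of the improvement attributed to Leep in the remark following Lemma \ref{main3}, and would eliminate the need for the strong-equivalence reduction and the two-case Laplace argument entirely.
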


We assume that $Y_{sing}(k) \neq \emptyset$.  First we rewrite $\eqref{Qform2}$ in terms of the $5 \times 5$ matrix $A = A(\yy)$, defined by \beq\label{Mdef2} Q_\yy(\xx,t) = \, ^T(\xx, t)A(\xx, t). \eeq  Also we consider the $4 \times 4$ submatrices $M_i$ given by $Q_i(\xx)= \, ^T \xx M_i \xx .$

Thus $A$ must take the shape \beq\label{Adef2} A(\yy) =
\left(\begin{array}{cccccc}
 &   &  & \vline & q_1(\yy)   \\
 & \sum_i y_i M_i &  & \vline & \vdots  \\
 &   &  & \vline & q_4(\yy)     \\
\hline 
q_1(\yy) & \cdots & q_4(\yy) & \vline  & c(\yy) &    \\

\end{array}\right).
\eeq

We must show under the assumption $Y_{sing}(k) \neq \emptyset$, that the determinant $\det [A(\yy)]$ is not identically zero in $\yy$, as this will show that $\det [A(\yy)]\neq 0$ for generic $\yy$.  Consider the Leibniz determinant formula, which expresses the determinant in terms of products of elements of the matrix which are on `diagonals'.  We shall say a diagonal of $A$ is of \textit{type 1}, if it contains the element $c(\yy)$, and \textit{type 2} otherwise.  Our strategy is to try to distinguish between the two types of diagonal.

Say that the cubic form $C$ is \textit{strongly equivalent} to a cubic form $C'$ if there is a rational non-singular change of variables, $y'_j := L'_j(\yy)$ such that $C(\xx, \yy) = C'(\xx, \yy')$.  Then we can define $A$ and $M_i$ analogously for $C'$.

\begin{thma}If $11 \leq h \leq n-4$, the cubic form $C$ is strongly equivalent to a form $C'$, for which $M_i=0$ for at least $h - 10$ of the $i$'s.  \end{thma}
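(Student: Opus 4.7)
The plan is to interpret strong equivalence as the natural $\txt{GL}_h(k)$-action on the ordered tuple $(M_1, \ldots, M_h)$ and then apply a straightforward dimension count, since these symmetric matrices live in a space of dimension $10$.

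First I would make the transformation law explicit. A strong equivalence is a non-singular linear change $\yy = B \yy'$ with $B \in \txt{GL}_h(k)$. Substituting into $C$ yields
\[
C(\xx,\yy) \;=\; \sum_{j=1}^{h} y'_j \Bigl( \sum_{i=1}^{h} B_{ij}\, Q_i(\xx) \Bigr) \;+\; 2\sum_{j=1}^{4} x_j\, q_j(B\yy') \;+\; c(B\yy'),
\]
so the new quadratic forms are $Q'_j = \sum_i B_{ij} Q_i$, and hence $M'_j = \sum_i B_{ij} M_i$. The terms $q_j(B\yy')$ and $c(B\yy')$ remain quadratic and cubic in $\yy'$ respectively, preserving the structural form of $C'$ and playing no role in the count of vanishing $M'_i$'s.

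Next I would invoke the fact that the space of symmetric $4 \times 4$ matrices over $k$ has dimension $\binom{5}{2} = 10$. Defining the linear map $\phi: k^h \to \txt{Sym}_4(k)$ by $\phi(e_i) = M_i$, its image has some dimension $d \leq 10$, so by rank--nullity $\dim \ker \phi = h - d \geq h - 10$. I would then choose a basis $v_1, \ldots, v_h$ of $k^h$ with $v_{d+1}, \ldots, v_h$ spanning $\ker \phi$, obtained by extending an arbitrary basis of the kernel to a basis of $k^h$. Taking $B$ to be the invertible matrix whose columns are the $v_j$ produces a strong equivalence under which $M'_j = \phi(v_j) = 0$ for every $j > d$, and so at least $h - 10$ of the new $M'_i$'s vanish, as required.

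The argument presents no real obstacle, being a direct consequence of rank--nullity once the action on $(M_1, \ldots, M_h)$ is identified with the standard $\txt{GL}_h(k)$-action. The hypothesis $h \geq 11$ serves only to make the conclusion non-vacuous: when $h \leq 10$ the bound $h - 10$ is non-positive and there is nothing to prove.
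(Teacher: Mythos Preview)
Your proof is correct and takes essentially the same approach as the paper: both exploit that the $M_i$ (equivalently the $Q_i$) lie in the $10$-dimensional space of symmetric $4 \times 4$ matrices, so a suitable change of basis in $\yy$ kills at least $h-10$ of them. Your rank--nullity formulation of the linear map $\phi(e_i)=M_i$ is a clean repackaging of the paper's argument, which instead writes $\sum y_i Q_i = \sum_{j\le t} L_j(\yy)Q_j(\xx)$ for independent linear forms $L_j$ and then extends these to define the new coordinates $y'_i = L_i(\yy)$.
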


\begin{proof}  Note that the set of quadratic forms in 4 variables defined over $k$ is a 10 dimensional vector space under addition over $k$  (since the space of $4 \times 4$ symmetric matrices is).  Consider the quadratic forms $Q_i$ from $\eqref{Qform}$.  We have $h \geq 11$ of them, and we suppose that a maximal linearly independent subset of them has size $t \leq 10$.  It follows that, after relabelling if necessary, we can suppose that $Q_{t+1} , \dots , Q_h \in \txt{span} (Q_1 , \dots , Q_{t})$.  Then considering the first sum of the RHS of $\eqref{Qform}$, we can write  $$\sum_{i=1}^h y_i Q_i (\xx) = \sum_{j=1}^{t} L_j(\yy)Q_j(\xx),$$ for some linear forms $L_j$.  Write $\yy=(\www,\zz)$, with $\www=(y_1,..,y_t)$ and $\zz=(y_{t+1},...,y_h)$.  We observe that $L_j(\www, \textbf{0} )=w_j$ for $1\leq j\leq t$.  Hence if $L_j$ were linearly dependent, then so would be $L_j(\www, \textbf{0})$, which is plainly false.

Let $L_{t+1} , \dots , L_{h}$ be a set of linearly independent $k$-rational forms, none of which lie in the span of $\{L_1, \dots , L_t \}$.   Define $y'_i := L_i(\yy)$ for $i = 1, \dots ,h$.  By construction, this change of variable is non-singular.  Now if we rewrite the form $Q_\yy$ in terms of these new variables, 
\begin{align} Q_\yy(\xx,t) = \sum_{i=1}^h y_i Q'_i (\xx)  + 2 \sum_{j=1}^4 x_j q'_j(\yy) t + c'(\yy) t^2,  \end{align} where $Q'_i = Q_i$ for $i \leq t$, and $Q'_i = 0$ $i >t$.  Hence then the $4 \times 4$ submatrices $M'_i$ given by $Q'_i(\xx)= \, ^T \xx M'_i \xx,$ will all be zero for $i > t$.  There are $h - t \geq h - 10$ such matrices.  \end{proof}

For convenience, we assume that the change of variable from the lemma has already been made, and relabel so that the first $\theta := h - 10$ of the matrices $M_i$ are zero.

Consider the cubic form $C^{\star}(\xx, y_1 , \dots , y_\theta) = C(\xx, y_1 , \dots , y_\theta , 0 , \dots ,0)$, so that only terms dependent on $\xx, y_1 , \dots , y_\theta$ survive.  For our $4 \times 4$ matrices, $M_j = 0$ for $j \in \{ 1, \dots , \theta \}$, so $C^{\star}$ has no quadratic $x$ terms. 

\beq\label{Cstar}C^{\star}(\xx, y_1 , \dots , y_\theta) =  2 \sum_{i=1}^4 x_i q^{\star}_i(y_1 \dots , y_\theta) + c^{\star}(y_1, \dots ,y_\theta). \eeq

Now we recall Lemma $\ref{s-5}$.  Using the argument in the paragraph preceding this lemma, either the assumptions of this lemma are fulfilled for $M_j$, for $j \in \{\theta+1 ,\dots , h \}$, or in the alternative, there is a rational singular point on the cubic hypersurface.  Let $ \{ M'_j \}$ be the set of $3 \times 3$ matrices obtained by deleting the last row and column from each $M_j$.  Then clearly the assumptions of Lemma $\ref{s-5}$ will be inherited by this set.

We split into two cases.

\textbf{Case 1:} Suppose $q^{\star}_i \equiv 0$ for all $i$.  Let's also suppose that $c^{\star} \equiv 0$, so that $C(\xx, y_1 , \dots , y_\theta , 0 , \dots ,0) \equiv 0$.  Then $\{ y_{\theta+1} = \cdots = y_{h} = 0 \}$ is a linear space on $Y$, hence $h \leq 10$, which is a contradiction.  Thus we can assume there is a vector $(b_1 , \dots , b_{\theta}) \in \mathbb{A}_k^\theta$ such that $c^{\star}(b_1 , \dots , b_\theta)\neq 0$.

By Lemma $\ref{s-5}$, for a generic vector $(y_{\theta+1}, \dots , y_h) \in \mathbb{A}_{k}^{10}$, we shall have that $ \sum_{i=\theta+1}^{h} y_i M_i$ has non-zero determinant.  We fix one such generic vector $(b_{\theta+1} , \dots , b_{h})$, and put $M = \sum_{i={\theta+1}}^{h} b_i M_i$.  

In $\eqref{Adef2}$, consider the matrix $$A =  A(P b_1 , \dots , P b_\theta , b_{\theta+1} , \dots , b_{h}).$$  We see that if $P$ is sufficiently large, the \textit{type 1} diagonals dominate the determinant of $A$.  In more detail, note that the bottom right most entry ($A_{5,5}$) will be $P^3 c^{\star}(b_1, \dots , b_\theta) + O(P^2)$, the quadratic $\yy$ terms ($A_{i,5}, A_{5,i}$ , $i \neq 5$) will be at most $O(P)$ since $q^{\star}_i \equiv 0$, and the elements from the leading $4 \times 4$ matrix are independent of $P$.  As in the proof of Lemma $\ref{s-5}$, we put $B = \max_i |b_i|$, $N= \max_j \| M_j \|$.  Then
$$\det (A) = P^3 c^{\star}(b_1, \dots , b_\theta) \det (M) + O_{B,N}(P^2).$$  Taking $P$ sufficiently large, we see the determinant must be non-zero, so we are done.

\textbf{Case 2:}  Suppose that $q^{\star}_4 \not\equiv 0$ (say).  Changing variables amongst $\{ y_1 , \dots , y_\theta \}$ if necessary, we will have a $y_1 ^2$ term present in $q^{\star}_4$ (completing the square for example).  We claim that we can make a linear change variables amongst $\{x_1, \dots , x_4\}$ so that in fact we have a $y_1 ^2$ term present only in $q^{\star}_4$.  To see this, write the quadratic $y$ part of the RHS of $\eqref{Cstar}$ as 
$$2 \sum_{i=1}^4 x_i q^{\star}_i(y_1 \dots , y_\theta) = 2 L(\xx)y^2_1 + \cdots  ,$$ where $L(\xx)$ is non-zero in the $x_4$ term.  Then we have the change of variables $(x'_1,x'_2,x'_3,x'_4) := (x_1,x_2,x_3,L(\xx))$, and this satisfies the claim.  Note that two changes of variables so far do not alter the fact that $M_i=0$ for $i \in \{ 1, \dots , \theta \}$.   

We can assume without loss of generality that these two changes of variable were made at the outset.  Suppose then that $q^{\star}_4(y_1 \dots , y_\theta)$ has the form $a y_1^2 + \cdots$. 

By Lemma $\ref{s-5}$ we know that for the $3 \times 3$ matrices $M'_i$, generically we have $\det \left( \sum_{i={\theta+1}}^{h} y_i M'_i \right) \neq 0$.  We take one such generic vector $(b_{\theta+1} , \dots , b_{h})$, and put $M' = \sum_{i={\theta+1}}^{h} b_i M'_i$.

Now we consider the matrix $$A= A(P , 0 , \dots , 0 , b_{\theta+1} , \dots , b_{h}).$$  This time it is a subset of the \textit{type 2} diagonals which dominates.  The two elements $A_{4,5}$ and $A_{5,4}$ are $a P^2 + O(P)$, the other quadratic $\yy$ terms are $O(P)$, the $A_{5,5}$ entry is $O(P^3)$, and the remaining entries are $O(1)$.  Then if $B = \max_i |b_i|$ and $N= \max_{j=1}^\mu \| M_j \|$, we have $$\det (A) = - a^2 P^4 \det (M') + O_{B,N}(P^3).$$  Taking $P$ sufficiently large we are done.
Thus Lemma $\ref{main3}$ is proved.

\begin{small}

\end{small}

\end{document}